\numberwithin{equation}{section}
\theoremstyle{plain}
\newtheorem{thm}{Theorem}[section]
\theoremstyle{remark}
\DeclareMathOperator{\td}{d\mspace{-2mu}}
\DeclareMathOperator{\bell}{B}
\begin{document}

\title[Complete monotonicity of a ratio of gamma functions]
{A ratio of many gamma functions and its properties with applications}

\author[F. Qi]{Feng Qi}
\address[Qi]{Institute of Mathematics, Henan Polytechnic University, Jiaozuo 454010, Henan, China; College of Mathematics, Inner Mongolia University for Nationalities, Tongliao 028043, Inner Mongolia, China; School of Mathematical Sciences, Tianjin Polytechnic University, Tianjin 300387, China}
\email{\href{mailto: F. Qi <qifeng618@gmail.com>}{qifeng618@gmail.com}, \href{mailto: F. Qi <qifeng618@hotmail.com>}{qifeng618@hotmail.com}, \href{mailto: F. Qi <qifeng618@qq.com>}{qifeng618@qq.com}}
\urladdr{\url{https://qifeng618.wordpress.com}}

\author[W.-H. Li]{Wen-Hui Li}
\address[Li]{Department of Fundamental Courses, Zhenghzou University of Science and Technology, Zhengzhou 450064, Henan, China}
\email{\href{mailto: W.-H. Li <wen.hui.li@foxmail.com>}{wen.hui.li@foxmail.com}, \href{mailto: W.-H. Li <wen.hui.li102@gmail.com>}{wen.hui.li102@gmail.com}}
\urladdr{\url{https://orcid.org/0000-0002-1848-8855}}

\author[S.-B. Yu]{Shu-Bin Yu}
\address[Yu]{School of Mathematical Sciences, Tianjin Polytechnic University, Tianjin 300387, China}
\email{\href{mailto: S.-B. Yu <shubin.yu@qq.com>}{shubin.yu@qq.com}}

\author[X.-Y. Du]{Xin-Yu Du}
\address[Du]{School of Computer Science and Technology, Tianjin Polytechnic University, Tianjin 300387, China}
\email{\href{mailto: X.-Y. Du <aduadu1010@qq.com>}{aduadu1010@qq.com}}

\author[B.-N. Guo]{Bai-Ni Guo}
\address[Guo]{School of Mathematics and Informatics, Henan Polytechnic University, Jiaozuo 454010, Henan, China}
\email{\href{mailto: B.-N. Guo <bai.ni.guo@gmail.com>}{bai.ni.guo@gmail.com}, \href{mailto: B.-N. Guo <bai.ni.guo@hotmail.com>}{bai.ni.guo@hotmail.com}}
\urladdr{\url{http://www.researcherid.com/rid/C-8032-2013}}

\begin{abstract}
In the paper, the authors establish an inequality involving exponential functions and sums, introduce a ratio of many gamma functions, discuss properties, including monotonicity, logarithmic convexity, (logarithmically) complete monotonicity, and the Bernstein function property, of the newly introduced ratio, and construct two inequalities of multinomial coefficients and multivariate beta functions.
\end{abstract}

\keywords{ratio; gamma function; Bernstein function; completely monotonic function; logarithmically completely monotonic function; inequality; multinomial coefficient; multivariate beta function; logarithmic derivative; logarithmic convexity; integral representation; open problem}

\subjclass[2010]{Primary 26A48; Secondary 05A20, 26D07, 26A51, 26D15, 33B15, 44A10}

\thanks{This paper was typeset using \AmS-\LaTeX}

\maketitle

\section{Preliminaries}

A real-valued function $f(x)$ defined on a finite or infinite interval $I\subseteq\mathbb{R}$ is said to be completely monotonic on $I$ if and only if $(-1)^kf^{(k)}(x)\ge0$ for all $k\in\{0\}\cup\mathbb{N}$ and $x\in I$.
A positive function $f(x)$ defined on a finite or infinite interval $I\subseteq\mathbb{R}$ is said to be logarithmically completely monotonic on $I$ if and only if $(-1)^k[\ln f(x)]^{(k)}\ge0$ for all $k\in\mathbb{N}$ and $x\in I$.
A nonnegative function $f(x)$ defined on a finite or infinity interval $I$ is called a Bernstein function if its derivative $f'(x)$ is completely monotonic on $I$.
In the paper~\cite{CBerg} and the monograph~\cite[pp.~66--68, Comments~5.29]{Schilling-Song-Vondracek-2nd}, it is pointed out that the terminology ``logarithmically completely monotonic function'' was explicitly defined in~\cite{compmon2, minus-one} for the first time. The logarithmically complete monotonicity is weaker than the Stieltjes function, but stronger than the complete monotonicity~\cite{CBerg, absolute-mon-simp.tex, JAAC384.tex}.
For more information on this topic, please refer to~\cite{Qi-Agar-Surv-JIA.tex, Schilling-Song-Vondracek-2nd, widder} and closely related references therein.
\par
Recall from~\cite[p.~51, (3.9)]{Temme-96-book} that the classical Euler gamma function $\Gamma(z)$ can be defined by
\begin{equation*}
\Gamma(z)=\lim_{n\to\infty}\frac{n!n^z}{(z)_{n+1}},
\end{equation*}
where $z\ne0,-1,-2,\dotsc$ and
\begin{equation*}
(z)_n=\prod_{\ell=0}^{n-1}(z+\ell)
=
\begin{cases}
z(z+1)\dotsm(z+n-1), & n\ge1\\
1, & n=0
\end{cases}
\end{equation*}
for $z\in\mathbb{C}$ and $n\in\{0\}\cup\mathbb{N}$ is called the rising factorial. The logarithmic derivative $\psi(x)=[\ln\Gamma(z)]'=\frac{\Gamma'(z)}{\Gamma(z)}$ of the gamma function $\Gamma(z)$ and $\psi^{(k)}$ for $k\in\mathbb{N}$ are usually called in sequence the digamma function, the trigamma function, the tetragamma function, and the like.
\par
With the help of the gamma function $\Gamma(z)$, the binomial coefficient $\binom{m}{n}=\frac{m!}{n!(m-n)!}$ can be generalized as the multinomial coefficient
$$
\binom{a_1+a_2+\dotsm+a_m}{a_1,a_2,\dotsc,a_m}
=\frac{\Gamma\bigl(1+\sum_{i=1}^ma_i\bigr)} {\prod_{i=1}^{m}\Gamma(1+a_i)}
$$
and the classical beta function $\bell(a,b)=\frac{\Gamma(a)\Gamma(b)}{\Gamma(a+b)}$ can be generalized as the multivariate beta function
\begin{equation*}
\bell(a_1,a_2,\dotsc,a_m) =\frac{\Gamma(a_1)\Gamma(a_2)\dotsm\Gamma(a_m)} {\Gamma(a_1+a_2+\dotsm+a_m)},
\end{equation*}
where $\Re(a_1),\Re(a_2),\dotsc,\Re(a_m)\ge0$. See~\cite[Section~24.1.2]{abram} and~\cite[Section~II.2]{Flajolet-Sedgewick-2009}.

\section{Motivation}
Motivated by the papers~\cite{Ouimet-JMAA-2018, Alzer-CM-JMAA.tex} and related texts in the survey article~\cite{Qi-Agar-Surv-JIA.tex}, by establishing the inequality
\begin{equation}\label{Ouimet-lem-ineq}
\sum_{i=1}^{m}\frac1{x^{1/\nu_i}-1}+\sum_{j=1}^{n}\frac1{x^{1/\tau_j}-1}
>\sum_{i=1}^{m}\sum_{j=1}^{n}\frac1{x^{1/\mu_{ij}}-1},
\end{equation}
where $x>1$, $0<\mu_{ij}\le1$, $\nu_i=\sum_{j=1}^{n}\mu_{ij}$, $\tau_j=\sum_{i=1}^{m}\mu_{ij}$, and $\sum_{i=1}^{m}\nu_i=\sum_{j=1}^{n}\tau_j=1$, Ouimet obtained in~\cite{OUIMET-ARXIV-1907-05262} that the ratio
\begin{equation}\label{g(t)-Ouimet}
g(t)=\frac{\prod_{i=1}^{m}\Gamma(\alpha_it+1)\prod_{j=1}^{n}\Gamma\bigl(\beta_jt+1\bigr)} {\prod_{i=1}^{m}\prod_{j=1}^{n}\Gamma\bigl(\lambda_{ij}t+1\bigr)}
\end{equation}
is logarithmically completely monotonic function on $(0,\infty)$, where $m,n\in\mathbb{N}$ and $0<\alpha_i,\beta_j,\lambda_{ij}\le1$ such that
\begin{equation*}
\sum_{j=1}^{n}\lambda_{ij}=\alpha_i,\quad \sum_{i=1}^{m}\lambda_{ij}=\beta_j, \quad
\sum_{i=1}^{m}\alpha_i=\sum_{j=1}^{n}\beta_j
=\sum_{i=1}^{m}\sum_{j=1}^{n}\lambda_{ij}
=\sum_{j=1}^{n}\sum_{i=1}^{m}\lambda_{ij}\le1.
\end{equation*}
\par
We observe that
\begin{enumerate}
\item
the proof of the inequality~\eqref{Ouimet-lem-ineq} is lengthy and complicated;
\item
the inequality~\eqref{Ouimet-lem-ineq} can be refined and extended;
\item
the proof of logarithmically complete monotonicity of the ratio $g(t)$ in~\eqref{g(t)-Ouimet} is fatally wrong.
\end{enumerate}
\par
In this paper, we will
\begin{enumerate}
\item
refine and extend the inequality~\eqref{Ouimet-lem-ineq} and supply a concise proof of the refinement and extension;
\item
motivated by $g(t)$ in~\eqref{g(t)-Ouimet}, formulate a new ratio and prove its peroperties;
\item
construct inequalities of multinomial coefficients and multivariate beta functions.
\end{enumerate}

\section{A new inequality}

Now we present a new inequality which refines and extends the inequality~\eqref{Ouimet-lem-ineq}.

\begin{thm}\label{lambda>0-No-Restr}
Let $x>0$ and $\mu_{i,j}>0$ for $1\le i\le m$ and $1\le j\le n$. Then
\begin{equation}\label{Ouimet-lem-iq}
\sum_{i=1}^{m}\frac1{e^{x/\nu_i}-1}+\sum_{j=1}^{n}\frac1{e^{x/\tau_j}-1}
\ge2\sum_{i=1}^{m}\sum_{j=1}^{n}\frac1{e^{x/\mu_{ij}}-1},
\end{equation}
where $\nu_i=\sum_{j=1}^{n}\mu_{ij}$ and $\tau_j=\sum_{i=1}^{m}\mu_{ij}$ for $1\le i\le m$ and $1\le j\le n$.
\end{thm}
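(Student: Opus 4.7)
The plan is to deduce \eqref{Ouimet-lem-iq} from a superadditivity property of the one-variable auxiliary function $h(t):=\frac{1}{e^{x/t}-1}$, in which $x>0$ is regarded as a fixed parameter. Setting $h(0):=\lim_{t\to 0^{+}}h(t)=0$, the property we want is
\[
h(a_{1}+a_{2}+\dotsb+a_{k})\ge h(a_{1})+h(a_{2})+\dotsb+h(a_{k})
\]
for any positive $a_{1},\dotsc,a_{k}$. Granted this, I would apply it in two ways: with $\nu_{i}=\sum_{j=1}^{n}\mu_{ij}$, to obtain $h(\nu_{i})\ge\sum_{j}h(\mu_{ij})$ and hence $\sum_{i}h(\nu_{i})\ge\sum_{i,j}h(\mu_{ij})$; and with $\tau_{j}=\sum_{i=1}^{m}\mu_{ij}$, to obtain $\sum_{j}h(\tau_{j})\ge\sum_{i,j}h(\mu_{ij})$. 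Adding the two yields \eqref{Ouimet-lem-iq}.

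The superadditivity itself will come from convexity of $h$ on $[0,\infty)$ together with $h(0)=0$. Indeed, for any $a,b>0$ the convex combination $a=\frac{a}{a+b}(a+b)+\frac{b}{a+b}\cdot 0$ gives $h(a)\le\frac{a}{a+b}h(a+b)$; symmetrically $h(b)\le\frac{b}{a+b}h(a+b)$; summing yields $h(a)+h(b)\le h(a+b)$, and the $k$-summand form follows by induction.

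The main obstacle, and the computational heart of the argument, is thus convexity of $h$. I would compute $h''(t)$ directly; after the substitution $u=x/t>0$ one arrives at
\[
h''(t)=\frac{xe^{u}}{t^{3}(e^{u}-1)^{2}}\left[\frac{u(e^{u}+1)}{e^{u}-1}-2\right],
\]
so everything reduces to the scalar inequality $u(e^{u}+1)\ge 2(e^{u}-1)$ for $u\ge 0$, equivalently $\tanh(u/2)\le u/2$. This is classical; it follows, for instance, from $\tanh(0)=0$ and $(\tanh)'(y)=1-\tanh^{2}(y)\le 1$, or by checking that $\phi(u):=(u-2)e^{u}+u+2$ has $\phi(0)=\phi'(0)=\phi''(0)=0$ and $\phi'''(u)=(1+u)e^{u}>0$ on $[0,\infty)$, so $\phi\ge 0$ there. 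The inequality is strict for $u>0$, which in turn makes $h$ strictly convex and yields the (expected) conclusion that equality in \eqref{Ouimet-lem-iq} occurs only in the degenerate case $m=n=1$.
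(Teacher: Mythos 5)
Your proof is correct and follows essentially the same route as the paper: both arguments rest on convexity of the auxiliary function $t\mapsto 1/(e^{x/t}-1)$ (the paper writes it as $h(1/x)$ with $h(x)=1/(e^x-1)$) together with its vanishing at the origin, deduce superadditivity, and then apply it once to the row sums $\nu_i$ and once to the column sums $\tau_j$ before adding. The only differences are cosmetic --- you prove superadditivity directly from the convex-combination trick where the paper cites the star-shaped/superadditive lemma from Marshall--Olkin--Arnold, and you reduce the second-derivative sign to $\tanh(u/2)\le u/2$ where the paper analyzes the bracket $e^{1/x}(1-2x)+2x+1$ via its second derivative and its limit at infinity.
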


\begin{proof}
Recall from~\cite[p.~650]{Marshall-Olkin-Arnold} that
\begin{enumerate}
\item
a function $\varphi:[0,\infty)\to\mathbb{R}$ is said to be star-shaped if $\varphi(\nu x)\le\nu\varphi(x)$ for all $\nu\in[0,1]$ and $x\ge0$;
\item
a real function $\varphi$ defined on a set $S\subset\mathbb{R}^n$ is said to be super-additive if $x,y\in S$ implies $x+y\in S$ and $\varphi(x+y)\ge\varphi(x)+\varphi(y)$;
\item
if $\varphi:[0,\infty)\to\mathbb{R}$ is star-shaped, then $\varphi$ is super-additive;
\item
if $\varphi$ is a real function defined on $[0,\infty)$, $\varphi(0)\le0$, and $\varphi$ is convex, then $\varphi$ is star-shaped.
\end{enumerate}
\par
Let $h(x)=\frac{1}{e^x-1}$ for $x>0$. Then the inequality~\eqref{Ouimet-lem-iq} can be rearranged as
\begin{equation}\label{Exp-h(x)-Eq}
\sum_{i=1}^{m}h\biggl(\frac{x}{\nu_i}\biggr)+\sum_{j=1}^{n}h\biggl(\frac{x}{\tau_j}\biggr)
>2\sum_{i=1}^{m}\sum_{j=1}^{n}h\biggl(\frac{x}{\mu_{ij}}\biggr).
\end{equation}
A direct computation gives
\begin{gather*}
\frac{\td}{\td x}h\biggl(\frac1x\biggr)=\frac{e^{1/x}}{(e^{1/x}-1)^2 x^2},\quad
\frac{\td{}^2}{\td x^2}h\biggl(\frac1x\biggr)=\frac{e^{1/x} \bigl[e^{1/x}(1-2x)+2x+1\bigr]}{(e^{1/x}-1)^3 x^4},\\
\bigl[e^{1/x}(1-2x)+2x+1\bigr]'=2-e^{1/x}\biggl(\frac{1}{x^2}-\frac{2}{x}+2\biggr)\to0,\quad x\to\infty, \\
\bigl[e^{1/x}(1-2x)+2x+1\bigr]''=\frac{e^{1/x}}{x^4}>0,\quad
\lim_{x\to\infty}\bigl[e^{1/x}(1-2x)+2x+1\bigr]=0
\end{gather*}
for $x>0$. Consequently, combining these with $\lim_{x\to0^+}h\bigl(\frac1x\bigr)=0$ reveals that the function $h\bigl(\frac1x\bigr)$ is convex, then star-shaped, and then super-additive on $(0,\infty)$. As a result, it follows that
\begin{equation*}
\sum_{i=1}^{m}h\biggl(\frac{x}{\nu_i}\biggr)
=h\biggl(\frac{y}{\sum_{j=1}^{n}\mu_{ij}}\biggr)\ge \sum_{j=1}^{n}h\biggl(\frac{y}{\mu_{ij}}\biggr)\quad\text{and}\quad
\sum_{j=1}^{n}h\biggl(\frac{x}{\tau_j}\biggr)
=h\biggl(\frac{y}{\sum_{i=1}^{m}\mu_{ij}}\biggr)\ge \sum_{i=1}^{m}h\biggl(\frac{y}{\mu_{ij}}\biggr).
\end{equation*}
Substituting these two inequalities into the left hand side of~\eqref{Exp-h(x)-Eq} results in
\begin{gather*}
\sum_{i=1}^{m}h\biggl(\frac{x}{\nu_i}\biggr)
+\sum_{j=1}^{n}h\biggl(\frac{x}{\tau_j}\biggr)
\ge\sum_{i=1}^{m}\sum_{j=1}^{n}h\biggl(\frac{y}{\mu_{ij}}\biggr)
+\sum_{j=1}^{n}\sum_{i=1}^{m}h\biggl(\frac{y}{\mu_{ij}}\biggr)
\ge2\sum_{i=1}^{m}\sum_{j=1}^{n}h\biggl(\frac{y}{\mu_{ij}}\biggr).
\end{gather*}
The proof of the inequality~\eqref{Ouimet-lem-iq}, and then the proof of Theorem~\ref{lambda>0-No-Restr}, is thus complete.
\end{proof}

\section{A new ratio and its properties}

In this section, we formulate a new ratio of many gamma functions and find its properties.

\begin{thm}\label{Ouimet-thm-ext}
Let $\lambda=\begin{pmatrix}\lambda_{ij}\end{pmatrix}_{\substack{1\le i\le m\\ 1\le j\le n}}$ be a matrix such that $\lambda_{ij}>0$ for $1\le i\le m$ and $1\le j\le n$. Let $\alpha=(\alpha_1,\alpha_2,\dotsc,\alpha_m)$ and $\beta=(\beta_1,\beta_2,\dotsc,\beta_n)$ such that $\alpha_i=\sum_{j=1}^{n}\lambda_{ij}$ and $\beta_j=\sum_{i=1}^{m}\lambda_{ij}$ for $1\le i\le m$ and $1\le j\le n$.
Let $\rho\in\mathbb{R}$ and
\begin{equation}\label{f(m-n-lambda-alpha-beta-rho)(t)}
f_{m,n;\lambda,\alpha,\beta;\rho}(t)=\frac{\prod_{i=1}^{m}\Gamma(1+\alpha_it)\prod_{j=1}^{n}\Gamma\bigl(1+\beta_jt\bigr)} {\bigl[\prod_{i=1}^{m}\prod_{j=1}^{n}\Gamma\bigl(1+\lambda_{ij}t\bigr)\bigr]^\rho}.
\end{equation}
Then the following conclusions are valid:
\begin{enumerate}
\item
when $\rho\le2$, the second derivative $[\ln f_{m,n;\lambda,\alpha,\beta;\rho}(t)]''$ is a completely monotonic function of $t\in(0,\infty)$ and maps from $(0,\infty)$ onto the open interval
$$
\Biggl(0,\frac{\pi^2}{6}\Biggl(\sum_{i=1}^{m}\alpha_i^2 +\sum_{j=1}^{n}\beta_j^2 -\rho\sum_{i=1}^{m}\sum_{j=1}^{n}\lambda_{ij}^2\Biggr)\Biggr);
$$
\item
when $\rho=2$, the logarithmic derivative $[\ln f_{m,n;\lambda,\alpha,\beta;2}(t)]' =\frac{f_{m,n;\lambda,\alpha,\beta;2}'(t)} {f_{m,n;\lambda,\alpha,\beta;2}(t)}$ is a Bernstein function of $t\in(0,\infty)$ and maps from $(0,\infty)$ onto the open interval
\begin{equation*}
\left(0,\ln\frac{\prod_{i=1}^{m}\alpha_i^{\alpha_i}\prod_{j=1}^{n}\beta_j^{\beta_j}} {\Bigl(\prod_{i=1}^{m}\prod_{j=1}^{n}\lambda_{ij}^{\lambda_{ij}}\Bigr)^2}\right).
\end{equation*}
\item
when $\rho<2$, the logarithmic derivative $[\ln f_{m,n;\lambda,\alpha,\beta;p}(t)]'$ is increasing, concave, and from $(0,\infty)$ onto the open interval
$$
\Biggl(-\gamma(2-\rho)\sum_{i=1}^{m}\sum_{j=1}^{n}\lambda_{ij},\infty\Biggr),
$$
where $\gamma=0.57721566\dotsc$ is the Euler--Mascheroni constant;
\item
when $\rho=2$, the function $f_{m,n;\lambda,\alpha,\beta;2}(t)$ is increasing, logarithmically convex, and from $(0,\infty)$ onto the open interval $(1,\infty)$;
\item
when $\rho<2$, the function $f_{m,n;\lambda,\alpha,\beta;\rho}(t)$ has a unique minimum, is logarithmically convex, and satisfies
\begin{equation*}
\lim_{t\to0^+}f_{m,n;\lambda,\alpha,\beta;\rho}(t)=1 \quad\text{and}\quad \lim_{t\to\infty}f_{m,n;\lambda,\alpha,\beta;\rho}(t)=\infty.
\end{equation*}
\end{enumerate}
\end{thm}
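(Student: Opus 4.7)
The starting point is that $[\ln f_{m,n;\lambda,\alpha,\beta;\rho}(t)]''$ is a linear combination of trigamma values:
\[
[\ln f_{m,n;\lambda,\alpha,\beta;\rho}(t)]''=\sum_{i=1}^{m}\alpha_i^{2}\psi'(1+\alpha_it)+\sum_{j=1}^{n}\beta_j^{2}\psi'(1+\beta_jt)-\rho\sum_{i=1}^{m}\sum_{j=1}^{n}\lambda_{ij}^{2}\psi'(1+\lambda_{ij}t).
\]
The plan is to replace each trigamma value by its Laplace representation $\psi'(1+x)=\int_{0}^{\infty}\frac{ue^{-(1+x)u}}{1-e^{-u}}\td u$ and then rescale $u\mapsto u/\mu$ in every integral; this produces the uniform identity $\mu^{2}\psi'(1+\mu t)=\int_{0}^{\infty}\frac{u}{e^{u/\mu}-1}e^{-tu}\td u$. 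Collecting the three sums under a single integral sign yields
\[
[\ln f_{m,n;\lambda,\alpha,\beta;\rho}(t)]''=\int_{0}^{\infty}ue^{-tu}\left(\sum_{i=1}^{m}\frac{1}{e^{u/\alpha_i}-1}+\sum_{j=1}^{n}\frac{1}{e^{u/\beta_j}-1}-\rho\sum_{i=1}^{m}\sum_{j=1}^{n}\frac{1}{e^{u/\lambda_{ij}}-1}\right)\td u,
\]
whose integrand is nonnegative when $\rho\le2$ by Theorem~\ref{lambda>0-No-Restr} applied with $\nu_i=\alpha_i$, $\tau_j=\beta_j$, $\mu_{ij}=\lambda_{ij}$. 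The Bernstein--Widder theorem then delivers the complete monotonicity asserted in~(1).

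The ranges in (1)--(3) will be read off from boundary analysis of this representation. For~(1), dominated convergence combined with $\psi'(1)=\pi^{2}/6$ gives the value at $0^{+}$, while the integral tends to $0$ as $t\to\infty$. For $[\ln f]'$ the linear constraints $\sum_{i}\alpha_{i}=\sum_{j}\beta_{j}=\sum_{i,j}\lambda_{ij}$ are crucial: together with $\psi(1)=-\gamma$ they produce $[\ln f]'(0^{+})=-\gamma(2-\rho)\sum_{i,j}\lambda_{ij}$, and together with the asymptotic $\psi(1+x)=\ln x+\frac{1}{2x}+O(x^{-2})$ they show that the coefficient of $\ln t$ in $[\ln f]'(t)$ as $t\to\infty$ equals $(2-\rho)\sum_{i,j}\lambda_{ij}$. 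Hence when $\rho=2$ this coefficient vanishes and $[\ln f]'(\infty)$ is the finite constant $\ln\frac{\prod_i\alpha_{i}^{\alpha_{i}}\prod_j\beta_{j}^{\beta_{j}}}{(\prod_{i,j}\lambda_{ij}^{\lambda_{ij}})^{2}}$, whereas when $\rho<2$ it diverges to $+\infty$. Since $[\ln f]''$ is completely monotonic, in particular positive and decreasing, $[\ln f]'$ is increasing and concave, proving~(3); joining this with $[\ln f]'(0^{+})=0$ in the case $\rho=2$ shows $[\ln f]'$ to be a nonnegative function with completely monotonic derivative, i.e.\ a Bernstein function, proving~(2).

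Parts (4) and (5) will follow from the behaviour of $[\ln f]'$. Logarithmic convexity of $f$ is immediate from $[\ln f]''>0$, and $f(0^{+})=1$ is direct from the definition of $f_{m,n;\lambda,\alpha,\beta;\rho}$. For $\rho=2$, the sign $[\ln f]'\ge0$ yields strict monotonic increase of $f$, and $\ln f(t)\sim ct$ with $c$ the positive constant from~(2) forces $f(\infty)=\infty$, giving the range $(1,\infty)$ in~(4). For $\rho<2$, the negative value $[\ln f]'(0^{+})$ combined with strict increase of $[\ln f]'$ to $+\infty$ produces a unique zero $t_{0}\in(0,\infty)$, which is the unique minimizer of $f$; and $\ln f(t)\to\infty$ once more because $[\ln f]'(t)\to\infty$. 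The main technical obstacle anticipated is carrying out the Laplace-rescaling step cleanly so that the integrand matches Theorem~\ref{lambda>0-No-Restr} verbatim; after that, the argument reduces to a routine pipeline from complete monotonicity of $[\ln f]''$ to the Bernstein and convexity properties of $[\ln f]'$ and $f$.
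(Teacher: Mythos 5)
Your proposal follows essentially the same route as the paper: the trigamma Laplace representation rescaled into a single integral $\int_0^\infty u e^{-tu}(\cdots)\td u$ whose integrand is nonnegative for $\rho\le2$ by Theorem~\ref{lambda>0-No-Restr}, followed by the limits of $[\ln f_{m,n;\lambda,\alpha,\beta;\rho}(t)]'$ at $0^+$ and $\infty$ computed from $\psi(1)=-\gamma$ and $\psi(x)-\ln x\to0$. The only divergence is at the very end, where you deduce $\lim_{t\to\infty}f_{m,n;\lambda,\alpha,\beta;\rho}(t)=\infty$ directly from the limiting value of the logarithmic derivative (strictly positive for $\rho=2$ because $[\ln f]'$ increases from $0$, and $+\infty$ for $\rho<2$), which is a cleaner substitute for the paper's longer computation via Binet's formula and its separate verification that the constant in conclusion~(2) is positive; both arguments are sound.
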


\begin{proof}
Taking logarithm and differentiating give
\begin{align*}
\ln f_{m,n;\lambda,\alpha,\beta;\rho}(t)&=\sum_{i=1}^{m}\ln\Gamma(1+\alpha_it)+\sum_{j=1}^{n}\ln\Gamma\bigl(1+\beta_jt\bigr)
-\rho\sum_{i=1}^{m}\sum_{j=1}^{n}\ln\Gamma\bigl(1+\lambda_{ij}t\bigr),\\
[\ln f_{m,n;\lambda,\alpha,\beta;\rho}(t)]'&=\sum_{i=1}^{m}\alpha_i\psi(1+\alpha_it)+\sum_{j=1}^{n}\beta_j\psi\bigl(1+\beta_jt\bigr)
-\rho\sum_{i=1}^{m}\sum_{j=1}^{n}\lambda_{ij}\psi\bigl(1+\lambda_{ij}t\bigr),\\
[\ln f_{m,n;\lambda,\alpha,\beta;\rho}(t)]''&=\sum_{i=1}^{m}\alpha_i^2\psi'(1+\alpha_it)+\sum_{j=1}^{n}\beta_j^2\psi'\bigl(1+\beta_jt\bigr)
-\rho\sum_{i=1}^{m}\sum_{j=1}^{n}\lambda_{ij}^2\psi'\bigl(1+\lambda_{ij}t\bigr),
\end{align*}
and
\begin{gather*}
\lim_{t\to0^+}[\ln f_{m,n;\lambda,\alpha,\beta;\rho}(t)]''=\frac{\pi^2}{6}\Biggl(\sum_{i=1}^{m}\alpha_i^2+\sum_{j=1}^{n}\beta_j^2
-\rho\sum_{i=1}^{m}\sum_{j=1}^{n}\lambda_{ij}^2\Biggr),\\
\lim_{t\to0^+}[\ln f_{m,n;\lambda,\alpha,\beta;\rho}(t)]'= -\gamma(2-\rho)\sum_{i=1}^{m}\sum_{j=1}^{n}\lambda_{ij},\quad
\lim_{t\to0^+}f_{m,n;\lambda,\alpha,\beta;\rho}(t)=1.
\end{gather*}
Making use of the integral representation
\begin{equation*}
\psi^{(n)}(z)=(-1)^{n+1}\int_0^\infty\frac{t^n}{1-e^{-t}}e^{-zt}\td t, \quad \Re(z)>0
\end{equation*}
in~\cite[p.~260, 6.4.1]{abram} leads to
\begin{gather*}
[\ln f_{m,n;\lambda,\alpha,\beta;\rho}(t)]''=\sum_{i=1}^{m}\alpha_i^2\int_0^\infty\frac{s}{1-e^{-s}}e^{-(1+\alpha_it)s}\td s\\
+\sum_{j=1}^{n}\beta_j^2\int_0^\infty\frac{s}{1-e^{-s}}e^{-(1+\beta_jt)s}\td s -\rho\sum_{i=1}^{m}\sum_{j=1}^{n}\lambda_{ij}^2\int_0^\infty\frac{s}{1-e^{-s}}e^{-(1+\lambda_{ij}t)s}\td s\\
=\sum_{i=1}^{m}\alpha_i^2\int_0^\infty\frac{s}{e^s-1}e^{-\alpha_its}\td s
+\sum_{j=1}^{n}\beta_j^2\int_0^\infty\frac{s}{e^s-1}e^{-\beta_jts}\td s
-\rho\sum_{i=1}^{m}\sum_{j=1}^{n}\lambda_{ij}^2\int_0^\infty\frac{s}{e^s-1}e^{-\lambda_{ij}ts}\td s\\
=\sum_{i=1}^{m}\int_0^\infty\frac{u}{e^{u/\alpha_i}-1}e^{-tu}\td u
+\sum_{j=1}^{n}\int_0^\infty\frac{u}{e^{u/\beta_j}-1}e^{-tu}\td u
-\rho\sum_{i=1}^{m}\sum_{j=1}^{n}\int_0^\infty\frac{u}{e^{u/\lambda_{ij}}-1}e^{-tu}\td u\\
=\int_0^\infty u\Biggl(\sum_{i=1}^{m}\frac{1}{e^{u/\alpha_i}-1}
+\sum_{j=1}^{n}\frac{1}{e^{u/\beta_j}-1} -\rho\sum_{i=1}^{m}\sum_{j=1}^{n}\frac{1}{e^{u/\lambda_{ij}}-1}\Biggr)e^{-tu}\td u
\to0,\quad t\to0.
\end{gather*}
By virtue of Lemma~\ref{lambda>0-No-Restr}, when $\rho\le2$, we conclude that the second derivative $[\ln f_{m,n;\lambda,\alpha,\beta;\rho}(t)]''$ is completely monotonic with respect to $t\in(0,\infty)$.
\par
Since the second derivative $[\ln f_{m,n;\lambda,\alpha,\beta;\rho}(t)]''$ is completely monotonic with respect to $t\in(0,\infty)$, the logarithmic derivative $[\ln f_{m,n;\lambda,\alpha,\beta;\rho}(t)]'$ is increasing and concave on $(0,\infty)$. Hence,
\begin{gather*}
[\ln f_{m,n;\lambda,\alpha,\beta;\rho}(t)]' \ge\lim_{t\to0^+}\Biggl[\sum_{i=1}^{m} \alpha_i\psi(1+\alpha_it)+\sum_{j=1}^{n}\beta_j\psi\bigl(1+\beta_jt\bigr)
-\rho\sum_{i=1}^{m}\sum_{j=1}^{n}\lambda_{ij}\psi\bigl(1+\lambda_{ij}t\bigr)\Biggr]\\
=\sum_{i=1}^{m} \alpha_i\psi(1)+\sum_{j=1}^{n}\beta_j\psi(1) -\rho\sum_{i=1}^{m}\sum_{j=1}^{n}\lambda_{ij}\psi(1)
=\psi(1)\Biggl[\sum_{i=1}^{m} \alpha_i+\sum_{j=1}^{n}\beta_j-\rho\sum_{i=1}^{m}\sum_{j=1}^{n}\lambda_{ij}\Biggr]\\
=-\gamma(2-\rho)\sum_{i=1}^{m}\sum_{j=1}^{n}\lambda_{ij}
=\begin{dcases}
0,& \rho=2\\
-\gamma(2-\rho)\sum_{i=1}^{m}\sum_{j=1}^{n}\lambda_{ij}, & \rho<2
\end{dcases}
\end{gather*}
and
\begin{gather*}
[\ln f_{m,n;\lambda,\alpha,\beta;\rho}(t)]' \le\lim_{t\to\infty}\Biggl[\sum_{i=1}^{m} \alpha_i\psi(1+\alpha_it)+\sum_{j=1}^{n}\beta_j\psi\bigl(1+\beta_jt\bigr)
-\rho\sum_{i=1}^{m}\sum_{j=1}^{n}\lambda_{ij}\psi\bigl(1+\lambda_{ij}t\bigr)\Biggr]\\
=\sum_{i=1}^{m}\alpha_i\lim_{t\to\infty}[\psi(1+\alpha_it)-\ln(1+\alpha_it)]
+\sum_{j=1}^{n}\beta_j\lim_{t\to\infty}\bigl[\psi\bigl(1+\beta_jt\bigr)-\ln\bigl(1+\beta_jt\bigr)\bigr]\\
-\rho\sum_{i=1}^{m}\sum_{j=1}^{n}\lambda_{ij}\lim_{t\to\infty}\bigl[\psi\bigl(1+\lambda_{ij}t\bigr)-\ln\bigl(1+\lambda_{ij}t\bigr)\bigr]
+\ln\lim_{t\to\infty}\frac{\prod_{i=1}^{m}(1+\alpha_it)^{\alpha_i}\prod_{j=1}^{n}\bigl(1+\beta_jt\bigr)^{\beta_j}} {\prod_{i=1}^{m}\prod_{j=1}^{n}\bigl(1+\lambda_{ij}t\bigr)^{\rho\lambda_{ij}}}\\
=\ln\lim_{t\to\infty}\frac{\prod_{i=1}^{m}(1/t+\alpha_i)^{\alpha_i}\prod_{j=1}^{n}\bigl(1/t+\beta_j\bigr)^{\beta_j}} {\prod_{i=1}^{m}\prod_{j=1}^{n}\bigl(1/t+\lambda_{ij}\bigr)^{\rho\lambda_{ij}}}
+\ln\lim_{t\to\infty}t^{\sum_{i=1}^{m}\alpha_i+\sum_{j=1}^{n}\beta_j -\rho\sum_{i=1}^{m}\sum_{j=1}^{n}\lambda_{ij}}\\
=\ln\frac{\prod_{i=1}^{m}\alpha_i^{\alpha_i}\prod_{j=1}^{n}\beta_j^{\beta_j}} {\Bigl(\prod_{i=1}^{m}\prod_{j=1}^{n}\lambda_{ij}^{\lambda_{ij}}\Bigr)^\rho}
+\ln\lim_{t\to\infty}t^{(2-\rho)\sum_{i=1}^{m}\sum_{j=1}^{n}\lambda_{ij}}
=\ln\frac{\prod_{i=1}^{m}\alpha_i^{\alpha_i}\prod_{j=1}^{n}\beta_j^{\beta_j}} {\Bigl(\prod_{i=1}^{m}\prod_{j=1}^{n}\lambda_{ij}^{\lambda_{ij}}\Bigr)^\rho}
+\begin{dcases}
0, & \rho=2;\\
\infty, & \rho<2,
\end{dcases}
\end{gather*}
where we used the limit $\lim_{x\to\infty}[\psi(x)-\ln x]=0$ in~\cite[Theorem~1]{theta-new-proof.tex-BKMS} and~\cite[Section~1.4]{Sharp-Ineq-Polygamma-Slovaca.tex}.
Accordingly,
\begin{enumerate}
\item
when $\rho=2$, the logarithmic derivative $[\ln f_{m,n;\lambda,\alpha,\beta;2}(t)]'$ is positive and increasing and maps from $(0,\infty)$ onto
\begin{equation*}
\left(0,\ln\frac{\prod_{i=1}^{m}\alpha_i^{\alpha_i}\prod_{j=1}^{n}\beta_j^{\beta_j}} {\Bigl(\prod_{i=1}^{m}\prod_{j=1}^{n}\lambda_{ij}^{\lambda_{ij}}\Bigr)^2}\right).
\end{equation*}
\item
when $\rho<2$, the logarithmic derivative $[\ln f_{m,n;\lambda,\alpha,\beta;p}(t)]'$ is increasing, does not keep the same sign, and maps from $(0,\infty)$ onto
$$
\Biggl(-\gamma(2-\rho)\sum_{i=1}^{m}\sum_{j=1}^{n}\lambda_{ij},\infty\Biggr).
$$
\end{enumerate}
In conclusion, the logarithmic derivative $[\ln f_{m,n;\lambda,\alpha,\beta;2}(t)]'$ is a Bernstein function and the function $f_{m,n;\lambda,\alpha,\beta;\rho}(t)$ for $\rho<2$ has a minimum on $(0,\infty)$.
\par
It is easy to see that $\lim_{t\to0^+}f_{m,n;\lambda,\alpha,\beta;\rho}(t)=1$.
\par
In~\cite[p.~62, (3.20)]{Temme-96-book}, it was given that
\begin{equation*}
\ln\Gamma(z+1)=\biggl(z+\frac12\biggr)\ln z-z+\frac12\ln(2\pi)+\int_{0}^{\infty}\beta(t)e^{-zt}\td t,
\end{equation*}
where
\begin{equation*}
\beta(t)=\frac1t\biggl(\frac1{e^t-1}-\frac1t+\frac12\biggr).
\end{equation*}
Then a direct computation acquires
\begin{gather*}
\lim_{t\to\infty}\ln f_{m,n;\lambda,\alpha,\beta;\rho}(t)
=\lim_{t\to\infty}\Biggl(\sum_{i=1}^{m}\biggl[\biggl(\alpha_it+\frac12\biggr)\ln(\alpha_it)-\alpha_it +\frac12\ln(2\pi)+\int_{0}^{\infty}\beta(s)e^{-(\alpha_it)s}\td s\biggr]\\
+\sum_{j=1}^{n}\biggl[\biggl(\beta_jt+\frac12\biggr)\ln\bigl(\beta_jt\bigr)-\beta_jt +\frac12\ln(2\pi)+\int_{0}^{\infty}\beta(s)e^{-(\beta_jt)s}\td s\biggr]\\
-\rho\sum_{i=1}^{m}\sum_{j=1}^{n}\biggl[\biggl(\lambda_{ij}t+\frac12\biggr)\ln\bigl(\lambda_{ij}t\bigr)-\lambda_{ij}t +\frac12\ln(2\pi)+\int_{0}^{\infty}\beta(s)e^{-(\lambda_{ij}t)s}\td s\biggr]\Biggr)\\
=\frac{m+n-mn\rho}{2}\ln(2\pi)+\lim_{t\to\infty}\left(\ln\frac{\prod_{i=1}^{m}(\alpha_it)^{\alpha_it+1/2} \prod_{j=1}^{n}\bigl(\beta_jt\bigr)^{\beta_jt+1/2}} {\Bigl[\prod_{j=1}^{n}\prod_{j=1}^{n}\bigl(\lambda_{ij}t\bigr)^{\lambda_{ij}t+1/2}\Bigr]^\rho} -(2-\rho)t\sum_{i=1}^{m}\sum_{j=1}^{n}\lambda_{ij}\right)\\
=\frac{m+n-mn\rho}{2}\ln(2\pi)+\frac12\ln\frac{\prod_{i=1}^{m}\alpha_i\prod_{j=1}^{n}\beta_j} {\prod_{j=1}^{n}\prod_{j=1}^{n}\lambda_{ij}^\rho}
+\lim_{t\to\infty}\Biggl(\frac{m+n-mn\rho}2\ln t\\
+t\ln\frac{\prod_{i=1}^{m}\alpha_i^{\alpha_i} \prod_{j=1}^{n}\beta_j^{\beta_j}} {\Bigl(\prod_{j=1}^{n}\prod_{j=1}^{n}\lambda_{ij}^{\lambda_{ij}}\Bigr)^\rho}
+t(\ln t-1)(2-\rho)\sum_{i=1}^{m}\sum_{j=1}^{n}\lambda_{ij}\Biggr)
=\infty,
\end{gather*}
where, when $\rho=2$, we used the fact that
\begin{gather*}
\frac{\prod_{i=1}^{m}\alpha_i^{\alpha_i}\prod_{j=1}^{n}\beta_j^{\beta_j}} {\Bigl(\prod_{i=1}^{m}\prod_{j=1}^{n}\lambda_{ij}^{\lambda_{ij}}\Bigr)^2}
=\frac{\prod_{i=1}^{m}\alpha_i^{\alpha_i}}{\prod_{i=1}^{m}\prod_{j=1}^{n}\lambda_{ij}^{\lambda_{ij}}} \frac{\prod_{j=1}^{n}\beta_j^{\beta_j}}{\prod_{i=1}^{m}\prod_{j=1}^{n}\lambda_{ij}^{\lambda_{ij}}}\\
=\prod_{i=1}^{m}\frac{\alpha_i^{\alpha_i}}{\prod_{j=1}^{n}\lambda_{ij}^{\lambda_{ij}}} \prod_{j=1}^{n}\frac{\beta_j^{\beta_j}}{\prod_{i=1}^{m}\lambda_{ij}^{\lambda_{ij}}}
=\prod_{i=1}^{m}\frac{\prod_{j=1}^{n}\bigl(\sum_{\ell=1}^{n}\lambda_{i\ell}\bigr)^{\lambda_{ij}}} {\prod_{j=1}^{n}\lambda_{ij}^{\lambda_{ij}}}
\prod_{j=1}^{n}\frac{\prod_{i=1}^{m}\bigl(\sum_{\ell}^{m}\lambda_{\ell j}\bigr)^{\lambda_{j\ell}}} {\prod_{i=1}^{m}\lambda_{ij}^{\lambda_{ij}}}\\
=\prod_{i=1}^{m}\prod_{j=1}^{n}\Biggl(\frac{\sum_{\ell=1}^{n}\lambda_{i\ell}}{\lambda_{ij}}\Biggr)^{\lambda_{ij}} \prod_{j=1}^{n}\prod_{i=1}^{m}\Biggl(\frac{\sum_{\ell}^{m}\lambda_{\ell j}}{\lambda_{ij}}\Biggr)^{\lambda_{j\ell}}
>1\times1=1.
\end{gather*}
The proof of Theorem~\ref{Ouimet-thm-ext} is complete.
\end{proof}

\section{Two inequalities}

In this section, as did in~\cite[Sections~3 and~4]{Alzer-CM-JMAA.tex}, by applying the fourth conclusion in Theorem~\ref{Ouimet-thm-ext}, we derive two inequalities of multinomial coefficients $\binom{a_1+a_2+\dotsm+a_m}{a_1,a_2,\dotsc,a_m}$ and of multivariate beta functions $\bell(a_1,a_2,\dotsc,a_m)$.
\par
When $\rho=2$, the function $f_{m,n;\lambda,\alpha,\beta;\rho}(t)$ defined by~\eqref{f(m-n-lambda-alpha-beta-rho)(t)} can be rearranged as
\begin{align*}
f_{m,n;\lambda,\alpha,\beta;2}(t)&=\prod_{i=1}^{m}\frac{\Gamma\bigl(1+\sum_{j=1}^{n}\lambda_{ij}t\bigr)} {\prod_{j=1}^{n}\Gamma\bigl(1+\lambda_{ij}t\bigr)}
\prod_{j=1}^{n}\frac{\Gamma\bigl(1+\sum_{i=1}^{m}\lambda_{ij}t\bigr)} {\prod_{i=1}^{m}\Gamma\bigl(1+\lambda_{ij}t\bigr)}\\
&=\prod_{i=1}^{m}\binom{\sum_{j=1}^{n}\lambda_{ij}t}{\lambda_{i1}t,\lambda_{i2}t,\dotsc,\lambda_{im}t} \prod_{j=1}^{n}\binom{\sum_{i=1}^{m}\lambda_{ij}t}{\lambda_{1j}t,\lambda_{2j}t,\dotsc,\lambda_{mj}t}.
\end{align*}
For $a_i>0$ and $i\in\mathbb{N}$, multinomial coefficients and multivariate beta functions are connected by
\begin{equation*}
\binom{\sum_{i=1}^ma_i}{a_1,a_2,\dotsc,a_m}
=\frac{\sum_{i=1}^ma_i} {\prod_{i=1}^{m}a_i} \frac1{\bell(a_1,a_2,\dotsc,a_m)}.
\end{equation*}
Therefore, we have
\begin{align*}
f_{m,n;\lambda,\alpha,\beta;2}(t)&=\frac1{t^{2mn-m-n}} \prod_{i=1}^{m}\frac{\sum_{j=1}^{n}\lambda_{ij}} {\prod_{j=1}^{n}\lambda_{ij}} \frac1{\prod_{i=1}^{m}\bell(\lambda_{i1}t,\lambda_{i2}t,\dotsc,\lambda_{im}t)}\\
&\quad\times \prod_{j=1}^{n}\frac{\sum_{i=1}^{m}\lambda_{ij}}{\prod_{i=1}^{m}\lambda_{ij}} \frac1{\prod_{j=1}^{n}\bell(\lambda_{1j}t,\lambda_{2j}t,\dotsc,\lambda_{mj}t)}.
\end{align*}
\par
Let $\ell\in\mathbb{N}$ and $\theta_k\in(0,1)$ satisfy $\sum_{k=1}^{\ell}\theta_k=1$. Let $\lambda=\begin{pmatrix}\lambda_{ij}\end{pmatrix}_{\substack{1\le i\le m\\ 1\le j\le n}}$ be a matrix such that $\lambda_{ij}>0$ for $1\le i\le m$ and $1\le j\le n$.
By virtue of the fourth conclusion in Theorem~\ref{Ouimet-thm-ext}, the function $f_{m,n;\lambda,\alpha,\beta;2}(t)$ is logarithmically convex on $(0,\infty)$. Hence,
\begin{equation*}
f_{m,n;\lambda,\alpha,\beta;2}\Biggl(\sum_{k=1}^{\ell}\theta_ky_k\Biggr)\le \prod_{k=1}^{\ell}f_{m,n;\lambda,\alpha,\beta;2}^{\theta_k}(y_k).
\end{equation*}
Accordingly, by simplification, it follows that
\begin{gather*}
\frac{\prod_{j=1}^{n}\binom{\sum_{i=1}^{m}\lambda_{ij}\sum_{k=1}^{\ell}\theta_ky_k} {\lambda_{1j}\sum_{k=1}^{\ell}\theta_ky_k,\lambda_{2j}\sum_{k=1}^{\ell}\theta_ky_k,\dotsc, \lambda_{mj}\sum_{k=1}^{\ell}\theta_ky_k}}
{\prod_{k=1}^{\ell}\Bigl[\prod_{j=1}^{n}\binom{\sum_{i=1}^{m}\lambda_{ij}y_k} {\lambda_{1j}y_k,\lambda_{2j}y_k,\dotsc,\lambda_{mj}y_k}\Bigr]^{\theta_k}}\\
\le \frac{\prod_{k=1}^{\ell}\Bigl[\prod_{i=1}^{m}\binom{\sum_{j=1}^{n}\lambda_{ij}y_k} {\lambda_{i1}y_k,\lambda_{i2}y_k,\dotsc,\lambda_{im}y_k}\Bigr]^{\theta_k}} {\prod_{i=1}^{m}\binom{\sum_{j=1}^{n}\lambda_{ij}\sum_{k=1}^{\ell}\theta_ky_k} {\lambda_{i1}\sum_{k=1}^{\ell}\theta_ky_k, \lambda_{i2}\sum_{k=1}^{\ell}\theta_ky_k\,\dotsc, \lambda_{im}\sum_{k=1}^{\ell}\theta_ky_k}}
\end{gather*}
and
\begin{gather*}
\frac{\prod_{i=1}^{m}\bell(\lambda_{i1}\sum_{k=1}^{\ell}\theta_ky_k,\lambda_{i2}\sum_{k=1}^{\ell}\theta_ky_k,\dotsc, \lambda_{im}\sum_{k=1}^{\ell}\theta_ky_k)} {\prod_{k=1}^{\ell}\bigl[\prod_{i=1}^{m} \bell(\lambda_{i1}y_k,\lambda_{i2}y_k,\dotsc,\lambda_{im}y_k)\bigr]^{\theta_k}}\\ \times\frac{\prod_{j=1}^{n}\bell(\lambda_{1j}\sum_{k=1}^{\ell}\theta_ky_k,\lambda_{2j}\sum_{k=1}^{\ell}\theta_ky_k,\dotsc, \lambda_{mj}\sum_{k=1}^{\ell}\theta_ky_k)} {\prod_{k=1}^{\ell}\bigl[\prod_{j=1}^{n} \bell(\lambda_{1j}y_k,\lambda_{2j}y_k,\dotsc,\lambda_{mj}y_k)\bigr]^{\theta_k}}\\
\ge\frac{\prod_{k=1}^{\ell}y_k^{(2mn-m-n)\theta_k}}{\sum_{k=1}^{\ell}(\theta_ky_k)^{2mn-m-n}} \prod_{i=1}^{m}\frac{\sum_{j=1}^{n}\lambda_{ij}} {\prod_{j=1}^{n}\lambda_{ij}} \prod_{j=1}^{n}\frac{\sum_{i=1}^{m}\lambda_{ij}}{\prod_{i=1}^{m}\lambda_{ij}}
\prod_{k=1}^{\ell}\Biggl[\prod_{i=1}^{m}\frac{\prod_{j=1}^{n}\lambda_{ij}} {\sum_{j=1}^{n}\lambda_{ij}} \prod_{j=1}^{n}\frac{\prod_{i=1}^{m}\lambda_{ij}}{\sum_{i=1}^{m}\lambda_{ij}}\Biggr]^{\theta_k}.
\end{gather*}

\section{Three open problems}

Finally, we pose three open problems.

\subsection{First open problem}
The logarithmically complete monotonicity is stronger than the complete monotonicity~\cite{CBerg, absolute-mon-simp.tex, JAAC384.tex}. This means that a logarithmically completely monotonic function must be completely monotonic. Completely monotonic functions on the infinite interval $(0,\infty)$ have a characterization~\cite[p.~161, Theorem~12b]{widder}: a function $f(t)$ defined on the infinite interval $(0,\infty)$ is completely monotonic if and only if the integral
\begin{equation}\label{berstein-1}
f(t)=\int_0^\infty e^{-ts}\td\sigma(s)
\end{equation}
converges for $0<t<\infty$, where $\sigma(s)$ is nondecreasing.
In other words, a function $f(t)$ is completely monotonic on $(0,\infty)$ if and only if it is a Laplace transform of a nondecreasing measure $\sigma(s)$ on $(0,\infty)$.
\par
Under conditions of Theorem~\ref{Ouimet-thm-ext}, the second derivative $[\ln f_{m,n;\lambda,\alpha,\beta;\rho}(t)]''$ is completely monotonic with respect to $t\in(0,\infty)$. Motivated by the integral representation~\eqref{berstein-1}, we now pose the first open problem: can one find a closed expression of the nondecreasing measure $\sigma_{m,n;\alpha,\beta;\lambda}(s)$ such that
\begin{equation*}
[\ln f_{m,n;\lambda,\alpha,\beta;\rho}(t)]''=\int_0^\infty e^{-ts}\td\sigma_{m,n;\alpha,\beta;\lambda}(s)
\end{equation*}
converges for $0<t<\infty$?

\subsection{Second open problem}
Recall from~\cite[Theorem~3.2]{Schilling-Song-Vondracek-2nd} that
a function $f:(0,\infty)\to[0,\infty)$ is a Bernstein function if and only if it admits the representation
\begin{equation}\label{Levy-Khintchine-repreentation}
f(t)=a+bt+\int_0^\infty\bigl(1-e^{-ts}\bigr)\td\sigma(s),
\end{equation}
where $a,b\ge0$ and $\sigma(s)$ is a measure on $(0,\infty)$ satisfying $\int_0^\infty\min\{1,s\}\td\sigma(s)<\infty$. By Theorem~\ref{Ouimet-thm-ext}, the logarithmic derivative $[\ln f_{m,n;\lambda,\alpha,\beta;\rho}(t)]'$ is a Bernstein function on $(0,\infty)$. Motivated by the integral representation~\eqref{Levy-Khintchine-repreentation}, we now pose the second open problem: can one find the values of $a,b$ and present a closed expression of the measure $\sigma_{m,n;\alpha,\beta;\lambda}(s)$ such that
\begin{equation*}
[\ln f_{m,n;\lambda,\alpha,\beta;\rho}(t)]'=a+bt+\int_0^\infty\bigl(1-e^{-ts}\bigr)\td\sigma_{m,n;\alpha,\beta;\lambda}(s)
\end{equation*}
and $\int_0^\infty\min\{1,s\}\td\sigma_{m,n;\alpha,\beta;\lambda}(s)<\infty$ hold?
\par
In order to solve the above two open problems, we suggest readers to refer to the papers~\cite{CBerg, Berg-J-W-Lambert-2019, Berg-Pedersen-JCAM-2001, berg-pedersen-Alzer, Berg-Pedersen-ball-PAMS, berg-pedersen-rocky, Besenyei-MIA-13, Norlund-No-CM-JNT.tex, Mortici-monoburn.tex, Log-Poly-Prop-IIS.tex, Mort-Guo-Ma-Prad.tex, Catalan-Int-Surv.tex, Toader-Qi-M-B.tex, 22ICFIDCAA-Filomat.tex, Recipr-Sqrt-Geom-S.tex, Schroder-Seq-second.tex, Catalan-Number-S.tex, KMS-B14-0477.tex, Acta-Sinica-B20120547.tex, MIA-3303.tex, Qi-Zhang-Li-MJM-14} and closely related references therein.

\subsection{Third open problem}
Is the inequality~\eqref{Ouimet-lem-iq} in Theorem~\ref{lambda>0-No-Restr} sharp? Equivalently speaking, can the number $2$ in the right hand side of~\eqref{Ouimet-lem-iq} be replaced by a larger constant?


\begin{thebibliography}{99}

\bibitem{abram}
M. Abramowitz and I. A. Stegun (Eds), \textit{Handbook of Mathematical Functions with Formulas, Graphs, and Mathematical Tables}, National Bureau of Standards, Applied Mathematics Series \textbf{55}, 10th printing, Washington, 1972.

\bibitem{CBerg}
C. Berg, \textit{Integral representation of some functions related to the gamma function}, Mediterr. J. Math. \textbf{1} (2004), no.~4, 433\nobreakdash--439; available online at \url{https://doi.org/10.1007/s00009-004-0022-6}.

\bibitem{Berg-J-W-Lambert-2019}
C. Berg, E. Massa, A. P. Peron, \textit{A family of entire functions connecting the Bessel function $J_1$ and the Lambert $W$ function}, arXiv preprint (2019), \url{https://arxiv.org/abs/1903.07574}.

\bibitem{Berg-Pedersen-JCAM-2001}
C. Berg and H. L. Pedersen, \textit{A completely monotone function related to the Gamma function}, J. Comput. Appl. Math. \textbf{133} (2001), 219\nobreakdash--230; available online at \url{https://doi.org/10.1016/S0377-0427(00)00644-0}.

\bibitem{berg-pedersen-Alzer}
C. Berg and H. L. Pedersen, \textit{A completely monotonic function used in an inequality of Alzer}, Comput. Methods Funct. Theory \textbf{12} (2012), no.~1, 329\nobreakdash--341; available online at \url{https://doi.org/10.1007/BF03321830}.

\bibitem{Berg-Pedersen-ball-PAMS}
C. Berg and H. L. Pedersen, \textit{A one-parameter family of Pick functions defined by the Gamma function and related to the volume of the unit ball in $n$-space}, Proc. Amer. Math. Soc. \textbf{139} (2011), no.~6, 2121\nobreakdash--2132; available online at \url{https://doi.org/10.1090/S0002-9939-2010-10636-6}.

\bibitem{berg-pedersen-rocky}
C. Berg and H. L. Pedersen, \textit{Pick functions related to the gamma function}, Rocky Mountain J. Math. \textbf{32} (2002), 507\nobreakdash--525; available online at \url{https://doi.org/10.1216/rmjm/1030539684}.

\bibitem{Besenyei-MIA-13}
\'A. Besenyei, \textit{On complete monotonicity of some functions related to means}, Math. Inequal. Appl. \textbf{16} (2013), no.~1, 233\nobreakdash--239; available online at \url{https://doi.org/10.7153/mia-16-17}.

\bibitem{Flajolet-Sedgewick-2009}
P. Flajolet and R. Sedgewick, \emph{Analytic Combinatorics}, Cambridge University Press, Cambridge, 2009; available online at \url{https://doi.org/10.1017/CBO9780511801655}.

\bibitem{absolute-mon-simp.tex}
B.-N. Guo and F. Qi, \textit{A property of logarithmically absolutely monotonic functions and the logarithmically complete monotonicity of a power-exponential function}, Politehn. Univ. Bucharest Sci. Bull. Ser. A Appl. Math. Phys. \textbf{72} (2010), no.~2, 21\nobreakdash--30.

\bibitem{theta-new-proof.tex-BKMS}
B.-N. Guo and F. Qi, \textit{Two new proofs of the complete monotonicity of a function involving the psi function}, Bull. Korean Math. Soc. \textbf{47} (2010), no.~1, 103\nobreakdash--111; available online at \url{https://doi.org/10.4134/bkms.2010.47.1.103}.

\bibitem{Sharp-Ineq-Polygamma-Slovaca.tex}
B.-N. Guo, F. Qi, J.-L. Zhao, and Q.-M. Luo, \textit{Sharp inequalities for polygamma functions}, Math. Slovaca \textbf{65} (2015), no.~1, 103\nobreakdash--120; available online at \url{https://doi.org/10.1515/ms-2015-0010}.

\bibitem{Marshall-Olkin-Arnold}
A. W. Marshall, I. Olkin, and B. C. Arnold, \emph{Inequalities: Theory of Majorization and its Applications}, 2nd Ed., Springer Verlag, New York-Dordrecht-Heidelberg-London, 2011; available online at \url{https://doi.org/10.1007/978-0-387-68276-1}.

\bibitem{OUIMET-ARXIV-1907-05262}
F. Ouimet, \textit{Complete monotonicity of a ratio of gamma functions and some combinatorial inequalities for multinomial coefficients}, arXiv preprint (2019), available online at \url{https://arxiv.org/abs/1907.05262}.

\bibitem{Ouimet-JMAA-2018}
F. Ouimet, \textit{Complete monotonicity of multinomial probabilities and its application to Bernstein estimators on the simplex}, J. Math. Anal. Appl. \textbf{466} (2018), no.~2, 1609\nobreakdash--1617; available online at \url{https://doi.org/10.1016/j.jmaa.2018.06.049}.

\bibitem{Norlund-No-CM-JNT.tex}
F. Qi, \textit{An integral representation, complete monotonicity, and inequalities of Cauchy numbers of the second kind}, J. Number Theory \textbf{144} (2014), 244\nobreakdash--255; available online at \url{https://doi.org/10.1016/j.jnt.2014.05.009}.

\bibitem{Mortici-monoburn.tex}
F. Qi, \textit{Integral representations and complete monotonicity related to the remainder of Burnside's formula for the gamma function}, J. Comput. Appl. Math. \textbf{268} (2014), 155\nobreakdash--167; available online at \url{https://doi.org/10.1016/j.cam.2014.03.004}.

\bibitem{Log-Poly-Prop-IIS.tex}
F. Qi, \textit{Integral representations for multivariate logarithmic polynomials}, J. Comput. Appl. Math. \textbf{336} (2018), 54\nobreakdash--62; available online at \url{https://doi.org/10.1016/j.cam.2017.11.047}.

\bibitem{Qi-Agar-Surv-JIA.tex}
F. Qi and R. P. Agarwal, \textit{On complete monotonicity for several classes of functions related to ratios of gamma functions}, J. Inequal. Appl. \textbf{2019}, Paper No.~36, 42~pages; available online at \url{https://doi.org/10.1186/s13660-019-1976-z}.

\bibitem{compmon2}
F. Qi and C.-P. Chen, \textit{A complete monotonicity property of the gamma function}, J. Math. Anal. Appl. \textbf{296} (2004), no.~2, 603\nobreakdash--607; available online at \url{https://doi.org/10.1016/j.jmaa.2004.04.026}.

\bibitem{minus-one}
F. Qi and B.-N. Guo, \textit{Complete monotonicities of functions involving the gamma and digamma functions}, RGMIA Res. Rep. Coll. \textbf{7} (2004), no.~1, Art.~8, 63\nobreakdash--72; available online at \url{http://rgmia.org/v7n1.php}.

\bibitem{Mort-Guo-Ma-Prad.tex}
F. Qi and B.-N. Guo, \textit{Integral representations and complete monotonicity of remainders of the Binet and Stirling formulas for the gamma function}, Rev. R. Acad. Cienc. Exactas F\'is. Nat. Ser. A Math. RACSAM \textbf{111} (2017), no.~2, 425\nobreakdash--434; available online at \url{https://doi.org/10.1007/s13398-016-0302-6}.

\bibitem{Catalan-Int-Surv.tex}
F. Qi and B.-N. Guo, \textit{Integral representations of the Catalan numbers and their applications}, Mathematics \textbf{5} (2017), no.~3, Article~40, 31~pages; available online at \url{https://doi.org/10.3390/math5030040}.

\bibitem{Toader-Qi-M-B.tex}
F. Qi and B.-N. Guo, \textit{L\'evy--Khintchine representation of Toader--Qi mean}, Math. Inequal. Appl. \textbf{21} (2018), no.~2, 421\nobreakdash--431; available online at \url{https://doi.org/10.7153/mia-2018-21-29}.

\bibitem{JAAC384.tex}
F. Qi and W.-H. Li, \textit{A logarithmically completely monotonic function involving the ratio of gamma functions}, J. Appl. Anal. Comput. \textbf{5} (2015), no.~4, 626\nobreakdash--634; available online at \url{https://doi.org/10.11948/2015049}.

\bibitem{22ICFIDCAA-Filomat.tex}
F. Qi and W.-H. Li, \emph{Integral representations and properties of some functions involving the logarithmic function}, Filomat \textbf{30} (2016), no.~7, 1659\nobreakdash--1674; available online at \url{https://doi.org/10.2298/FIL1607659Q}.

\bibitem{Recipr-Sqrt-Geom-S.tex}
F. Qi and D. Lim, \textit{Integral representations of bivariate complex geometric mean and their applications}, J. Comput. Appl. Math. \textbf{330} (2018), 41\nobreakdash--58; available online at \url{https://doi.org/10.1016/j.cam.2017.08.005}.

\bibitem{Alzer-CM-JMAA.tex}
F. Qi, D.-W. Niu, D. Lim, and B.-N. Guo, \textit{Some logarithmically completely monotonic functions and inequalities for multinomial coefficients and multivariate beta functions}, HAL archives (2018), available online at \url{https://hal.archives-ouvertes.fr/hal-01769288}.

\bibitem{Schroder-Seq-second.tex}
F. Qi, X.-T. Shi, and B.-N. Guo, {Integral representations of the large and little Schr\"oder numbers}, \emph{Indian J. Pure Appl. Math.} \textbf{49} (2018), no.~1, 23\nobreakdash--38; available online at \url{https://doi.org/10.1007/s13226-018-0258-7}.

\bibitem{Catalan-Number-S.tex}
F. Qi, X.-T. Shi, and F.-F. Liu, \textit{An integral representation, complete monotonicity, and inequalities of the Catalan numbers}, Filomat \textbf{32} (2018), no.~2, 575\nobreakdash--587; available online at \url{https://doi.org/10.2298/FIL1802575Q}.

\bibitem{KMS-B14-0477.tex}
F. Qi and X.-J. Zhang, \textit{An integral representation, some inequalities, and complete monotonicity of the Bernoulli numbers of the second kind}, Bull. Korean Math. Soc. \textbf{52} (2015), no.~3, 987\nobreakdash--998; available online at \url{https://doi.org/10.4134/BKMS.2015.52.3.987}.

\bibitem{Acta-Sinica-B20120547.tex}
F. Qi, X.-J. Zhang, and W.-H. Li, \textit{An integral representation for the weighted geometric mean and its applications}, Acta Math. Sin. (Engl. Ser.) \textbf{30} (2014), no.~1, 61\nobreakdash--68; available online at \url{https://doi.org/10.1007/s10114-013-2547-8}.

\bibitem{MIA-3303.tex}
F. Qi, X.-J. Zhang, and W.-H. Li, \textit{L\'evy-Khintchine representation of the geometric mean of many positive numbers and applications}, Math. Inequal. Appl. \textbf{17} (2014), no.~2, 719\nobreakdash--729; available online at \url{https://doi.org/10.7153/mia-17-53}.

\bibitem{Qi-Zhang-Li-MJM-14}
F. Qi, X.-J. Zhang, and W.-H. Li, \textit{L\'evy-Khintchine representations of the weighted geometric mean and the logarithmic mean}, Mediterr. J. Math. \textbf{11} (2014), no.~2, 315\nobreakdash--327; available online at \url{https://doi.org/10.1007/s00009-013-0311-z}.

\bibitem{Schilling-Song-Vondracek-2nd}
R. L. Schilling, R. Song, and Z. Vondra\v{c}ek, \textit{Bernstein Functions---Theory and Applications}, 2nd ed., de Gruyter Studies in Mathematics \textbf{37}, Walter de Gruyter, Berlin, Germany, 2012; available online at \url{https://doi.org/10.1515/9783110269338}.

\bibitem{Temme-96-book}
N. M. Temme, \emph{Special Functions: An Introduction to Classical Functions of Mathematical Physics}, A Wiley-Interscience Publication, John Wiley \& Sons, Inc., New York, 1996; available online at \url{https://doi.org/10.1002/9781118032572}.

\bibitem{widder}
D. V. Widder, \textit{The Laplace Transform}, Princeton University Press, Princeton, 1946.

\end{thebibliography}
\end{document}